\newtheorem{theorem}{Theorem}
\newtheorem{lemma}{Lemma}
\newtheorem{remark}{Remark}
\begin{document}

\title{An Asymptotically Stable Continuous Robust Controller for a Class of Uncertain MIMO Nonlinear Systems}

\author{Baris Bidikli, Enver Tatlicioglu, Erkan Zergeroglu, Alper Bayrak \thanks{B. Bidikli, E. Tatlicioglu and A. Bayrak are with the Department of Electrical \& Electronics Engineering, Izmir Institute of Technology, Izmir, 35430 Turkey (Phone: +90 (232) 7506536; Fax: +90 (232) 7506599; E-mail: [barisbidikli,envertatlicioglu,alperbayrak]@iyte.edu.tr). E. Zergeroglu is with the Department of Computer Engineering, Gebze Institute of Technology, 41400, Gebze, Kocaeli, Turkey (Email: ezerger@bilmuh.gyte.edu.tr).}}
\maketitle

\begin{abstract}
In this work, we propose the design and analysis of a novel continuous robust controller for a class of multi--input multi--output (MIMO) nonlinear uncertain systems. The systems under consideration contains unstructured uncertainties in their drift and input matrices. The proposed controller compensates the overall system uncertainties and achieves asymptotic tracking where only the sign of the leading minors of the input gain matrix is assumed to be known. A Lyapunov based argument backed up with an integral inequality is applied to prove stability of the proposed controller and asymptotic convergence of the error signals. Simulation results are presented in order to illustrate the viability of the proposed method.
\end{abstract}

\section{Introduction}
The problem of controller design for multi--input multi--output (MIMO) systems with uncertainties have been the research interest of many researchers. When the system under consideration is linear, solutions as early as \cite{Sastry-Boston} are available. To name a few, in \cite{Sastry-Boston}, assuming that the exact knowledge of high frequency input gain matrix is available, an adaptive controller was proposed. Ioannou and Sun \cite{Ioannou-Sun} used a less restrictive assumption on the gain matrix, the controller proposed required the existence of an auxiliary matrix which when pre--multiplied by the high frequency gain matrix the resultant matrix would be positive definite and symmetric. In \cite{Costa-Aut}, an adaptive controller for minimum phase systems with degree one has been proposed assuming that only the signs of the leading minors of the high frequency input gain matrix is known. For nonlinear MIMO uncertain systems the problem becomes more complicated. Therefore the solutions for only some special cases are considered in the literature. An adaptive backstepping method for strict feedback systems was utilized in \cite{KrsticBook} with the assumption that the input gain matrix premultiplying the control input is known. In \cite{Kosmatopoulos}, a general procedure for the design of switching adaptive controllers including feedback linearizable, and parametric--pure feedback systems has been proposed. An adaptive neural controller for MIMO systems with block--triangular form was proposed in \cite{GeWang-TNN}. Some more applications of controller design for MIMO uncertain systems like visual servoing, thermal management, aeroelasticity vibration suppression and ship control were presented in \cite{Zerger-TMech}, \cite{Setlur03}, \cite{ReddyChenBehalMarzocca} and \cite{ship-conf}, respectively. 

Recently, \cite{WangChenBehal}, \cite{WangBehalXianChen}, \cite{WangBehal} have proposed robust and adaptive type controllers for the MIMO nonlinear systems of the form 
\begin{equation}
x^{\left( n\right) }=H\left( x,\dot{x},\cdots ,x^{(n-1)}\right) +G\left( x,\dot{x},\cdots ,x^{(n-2)}\right) \tau   \label{model0}
\end{equation}
where $(\cdot)^{i} $ denotes the $i^{th} $ time derivative, $H\left(\cdot \right) \in \mathbb{R}^{m\times 1} $  and  $G\left(\cdot \right) \in \mathbb{R}^{m\times m}$ are first order differentiable uncertain functions with $G\left(\cdot \right) $ being a real matrix with non--zero leading principal minors. Specifically in  \cite{WangChenBehal}, authors have extended the work of \cite{ZhangAcc} by redesigning the controller of \cite{ChenBehalDawson} removing an algebraic loop and potential singularity in their previous design and obtained a global Uniformly Ultimately Bounded (UUB) tracking error performance. In \cite{WangBehalXianChen}, an adaptive controller that ensures asymptotic error tracking has been proposed. Recently, a continuous robust controller achieving semi--globally asymptotic tracking performance for uncertain MIMO systems of the form \eqref{model0} with two degrees of freedom was proposed in \cite{WangBehal}.

In this work, we consider a slightly broader class of uncertain MIMO nonlinear system than that of \eqref{model0} which is of the form   
\begin{equation}
x^{\left( n\right) }=h\left( X\right) +g\left( X\right) \tau  \label{model1}
\end{equation}
where $X\left( t\right) =\left[ \begin{array}{cccc} x^T & \dot{x}^T & \cdots & \left( x^{\left( n-1\right) }\right) ^T \end{array} \right] ^T \in \mathbb{R}^{\left( mn \right) \times 1}$ is the combined state vector with $x^{\left(i\right) }\left( t\right) \in \mathbb{R}^{m\times 1}$ $i=0,...,n$, being states, $h\left( \cdot \right) \in \mathbb{R}^{m\times 1}$ is an uncertain signal, $g\left(\cdot \right) \in \mathbb{R}^{m\times m}$ is a real matrix with non--zero leading principal minors, and $\tau \left( t\right) \in \mathbb{R}^{m\times 1}$ is the control input.  To our best knowledge there are only few works on this model in the literature. Namely in \cite{Ioannou-TAC}, Xu and Ioannou considered the case where $g\left(X\right)$ is either positive or negative definite, and designed a neural networks based adaptive controller that ensured local convergence of the tracking error to a residual set. While in \cite{xian-journal}, Xian \textit{et al.} considered the case where $g\left(X\right)$ is positive definite and a robust controller containing the integral of the signum of the error term was designed to obtain semi--global asymptotic tracking. More recently in \cite{ChenOFB}, Chen \textit{et al.} proposed a robust controller fused with a feedforward compensation term that ensured ultimate boundedness of the tracking error. Output feedback version of \cite{ChenOFB} were then proposed in \cite{Chen06a}, and \cite{Chen06}. 

In this paper, we have extended the full state feedback version of \cite{ChenOFB} to obtain asymptotic tracking as opposed to UUB. Apart from this, the proposed control design does not contain, though it is quite possible to design a similar one, an extra feedforward compensation term. An output feedback version can also be obtained similar to that of \cite{Chen06a}, or \cite{Chen06}, however this result would possible lead an  UUB error tracking performance. The main novelty of the method is the use of integral inequalities in conjunction with an initial Lyapunov based analysis to prove the boundedness of the error signals and then use the boundedness of the error terms in the final analysis to obtain the asymptotic tracking. The rest of the paper is organized as follows; Section II introduces the error system development while the full state controller development is introduced in Section III. Stability of the closed system under the proposed method and the numerical simulations are given in Sections IV and V, respectively. Finally, concluding remarks are presented in Section VI.

\section{Error System Development}

Based on the assumption that $g\left( \cdot \right) $ being a real matrix with non--zero leading principal minors, the following matrix decomposition is utilized \cite{Costa-Aut}, \cite{Morse-SCL}
\begin{equation}
g =S(X) D U(X) \label{decomposition}
\end{equation}
where $S\left( X\right) \in \mathbb{R}^{m\times m}$ is a symmetric positive definite matrix, $D \in \mathbb{R}^{m\times m}$ is a diagonal matrix with entries being $\pm 1$, and  $U\left( X\right)\in \mathbb{R}^{m\times m}$ is a unity upper triangular matrix. Similar to \cite{ChenOFB}, we assume that $D$ is available for control design. As we assumed that the leading principal minors of $g\left( X \right)$ are non--zero, from \eqref{model1} it is straightforward to obtain 
\begin{equation}
\tau =g^{-1}\left( x^{\left( n\right) }-h\right).  \label{tau1}
\end{equation}
Taking the time derivative of the system model in \eqref{model1} and then substituting \eqref{tau1} results 
\begin{eqnarray}
x^{\left( n+1\right) } &=&\dot{h}+\dot{g}\tau +g\dot{\tau}  \nonumber \\
&=&\dot{h}+\dot{g}g^{-1}\left( x^{\left( n\right) }-h\right) +g\dot{\tau} \nonumber \\
&=&\varphi +SDU\dot{\tau} \label{model2}
\end{eqnarray}
where \eqref{decomposition} was utilized and $\varphi \left( X,x^{\left(n\right) }\right) \in \mathbb{R}^{m\times 1}$ is an auxiliary signal defined to have the following form  
\begin{equation}
\varphi =\dot{h}+\dot{g}g^{-1}\left( x^{\left( n\right)} -h\right) . \label{phi}
\end{equation}
Multiplying both sides of \eqref{model2} with $S^{-1}\left( X\right)$ results in 
\begin{equation}
S^{-1}x^{\left( n+1\right) }=S^{-1}\varphi +DU\dot{\tau} \label{model3}
\end{equation}
and after defining $M\left( X\right) \triangleq S^{-1}\in \mathbb{R}^{m\times m}$ and $f\left( X,x^{\left( n\right) }\right) \triangleq S^{-1}\varphi \in \mathbb{R}^{m\times 1}$, we obtain 
\begin{equation}
Mx^{\left( n+1\right) }=f+DU\dot{\tau}  \label{model4}
\end{equation}
where  $M\left( X\right)$  is assumed to satisfy the following inequality
\begin{equation}
\underline{m}\left\Vert \chi \right\Vert ^{2}\leq \chi ^{T}M\left( X\right)
\chi \leq \bar{m}\left( X\right) \left\Vert \chi \right\Vert ^{2}~~~\forall
\chi \in \mathbb{R}^{m\times 1}  \label{M}
\end{equation}
with  $\underline{m}\in \mathbb{R}$ is a positive bounding constant, and $\bar{m}\left( X\right) \in \mathbb{R}$ is a
positive, non--decreasing function.

Our main control objective is to ensure that the system states would track a given smooth desired trajectory as closely as possible. In order to quantify the control objective, the output tracking error, $e_1\left( t\right) \in \mathbb{R}^{m\times 1}$, is defined as the difference between the reference and the actual system states as 
\begin{equation}
e_1\triangleq x_r-x \label{e1}
\end{equation}
where $x_r\left( t\right) \in \mathbb{R}^{m\times 1}$ is the reference trajectory satisfying following properties 
\begin{equation}
x_r\left( t\right) \in \mathcal{C}^n \text{ , } x_r^{\left( i\right) }\left( t\right) \in \mathcal{L}_\infty \text{ , }i=0,1,...,\left( n+1\right) .
\label{xds2}
\end{equation}
To ease the subsequent presentation, a combination of the reference trajectory and its time derivatives is also defined as $X_r\left( t\right) \triangleq \left[ \begin{array}{cccc} x_r^T & \dot{x}_r^T & \cdots & \left( x_r^{\left( n-1\right) }\right) ^T \end{array} \right] ^T\in \mathbb{R}^{\left(mn\right) \times 1}$. The control design objective is to develop a robust control law that ensures $\left\| e_1^{\left( i\right) }\left(t\right) \right\| \to 0$ as $t\to +\infty $, $i=0,...,n$, while ensuring that  all signals within the closed--loop system remain bounded. In our controller development, we will also assume that the combined state vector $X\left( t\right) $ is measurable.

To facilitate the control design, auxiliary error signals, denoted by $e_i\left( t\right) \in \mathbb{R}^{m\times 1}$, $i=2,3,...,n$, are defined as follows 
\begin{eqnarray}
e_2 & \triangleq & \dot{e}_1+e_1 \label{e2} \\
e_3 & \triangleq & \dot{e}_2+e_2+e_1 \label{e3} \\
& \vdots & \nonumber \\
e_n & \triangleq & \dot{e}_{n-1}+e_{n-1}+e_{n-2} . \label{en}
\end{eqnarray}
A general expression for $e_i\left( t\right) $, $i=2,3,...,n$ in terms of $e_1\left( t\right) $ and its time derivatives can be obtained as 
\begin{equation}
e_i=\sum\limits_{j=0}^{i-1}a_{i,j}e_1^{\left( j\right) }  \label{ei}
\end{equation}
where $a_{i,j}\in \mathbb{R}$ are known positive constants, generated via a Fibonacci number series. Our controller development also requires the definition of a filtered tracking error term, $r\left( t\right) \in \mathbb{R}^{m\times 1}$, defined to have the following form 
\begin{equation}
r\triangleq \dot{e}_n+\alpha e_n  \label{r}
\end{equation}
where $\alpha \in \mathbb{R}^{m\times m}$ is a constant positive definite, diagonal, gain matrix. After differentiating \eqref{r} and premultiplying the resulting equation with $M\left( \cdot \right) $, the following expression can be derived 
\begin{equation}
M\dot{r}=M\left( x_r^{\left( n+1\right)}+\sum\limits_{j=0}^{n-2}a_{n,j}e_1^{\left( j+2\right) }+\alpha \dot{e}_n\right) -f-DU\dot{\tau} \label{Mrdot}
\end{equation}
where \eqref{model4}, \eqref{e1}, \eqref{ei}, and the fact that $a_{n,(n-1)}=1$ were utilized. Defining the auxiliary function, $N\left( X,x^{\left( n\right) },t\right) \in \mathbb{R}^{m\times 1}$, as follows
\begin{equation}
N\triangleq M\left( x_r^{\left( n+1\right)}+\sum\limits_{j=0}^{n-2}a_{n,j}e_1^{\left( j+2\right) }+\alpha \dot{e}_n\right) -f+e_n+\frac 12\dot{M}r  \label{N}
\end{equation}
the expression in \eqref{Mrdot} can be reformulated to  have the following form 
\begin{equation}
M\dot{r}=-\frac 12\dot{M}r-e_n-DU\dot{\tau}+N . \label{Mrdot1}
\end{equation}
Furthermore, the filtered tracking error dynamics in \eqref{Mrdot1} can be rearranged as
\begin{equation}
M\dot{r}=-\frac 12\dot{M}r-e_n-D\left( U-I_m\right) \dot{\tau}-D\dot{\tau}+\widetilde{N}+\bar{N} \label{OpenLoop}
\end{equation}
where we added and subtracted $D\dot{\tau}\left( t\right) $ to the right--hand side, $I_m\in \mathbb{R}^{m\times m}$ is the standard identity matrix, and $\bar{N}\left( t\right) $, $\widetilde{N}\left( t\right) \in \mathbb{R}^{m\times 1}$ are auxiliary signals defined as follows
\begin{eqnarray}
\bar{N} &\triangleq & \left. N\right|_{X=X_r,x^{\left( n\right)}=x_r^{\left( n\right) }} \label{Nr} \\
\widetilde{N} &\triangleq & N-\bar{N} . \label{Ntilda}
\end{eqnarray}
The main idea behind adding and subtracting $D\dot{\tau}\left( t\right) $ term to the right--hand side of \eqref{OpenLoop}, is to make use of the fact that $U\left( \cdot \right) $ is unity upper triangular, and thus $\left(U-I_m\right) $ is strictly upper triangular. This property will later be utilized in the stability analysis.

\section{Controller Formulation}
Based on the open--loop error system in \eqref{OpenLoop} and the subsequent stability analysis, the control input, $\tau\left(t\right)$, is designed to have the following form  
\begin{equation}
\tau =DK\left[ e_n\left( t\right) -e_n\left( t_0\right) +\alpha \int_{t_0}^te_n\left( \sigma \right) d\sigma \right] +D\Pi  \label{u}
\end{equation}
where the auxiliary signal $\Pi \left( t\right) \in \mathbb{R}^{m\times 1}$ is generated according to 
\begin{equation}
\dot{\Pi}=C\text{Sgn}\left( e_n\right) ,\Pi \left( t_0\right) =0_{m\times 1}. \label{ub}
\end{equation}
In \eqref{u} and \eqref{ub}, $K$, $C\in \mathbb{R}^{m\times m}$ are constant, diagonal, positive definite, gain matrices, $0_{m\times 1}\in \mathbb{R}^{m\times 1}$ is a vector of zeros and Sgn$\left( \cdot \right) \in \mathbb{R}^{m\times 1}$ is the vector signum function. Based on the structures of \eqref{u} and \eqref{ub}, the following expression is obtained for the time derivative of the control input 
\begin{equation}
\dot{\tau}=DKr+DC\text{Sgn}\left( e_n\right) \label{udot}
\end{equation}
where \eqref{r} was utilized. The control gain is chosen as $K=I_m+k_pI_m+ \text{diag}\left\{ k_{d,1},...,k_{d,\left( m-1\right) },0\right\} $ where $k_p$, $k_{d,i} \in \mathbb{R}$ are constant, positive, control gains. Finally, after substituting \eqref{udot} into \eqref{OpenLoop}, the following closed--loop error system for $r\left( t\right) $ is obtained 
\begin{equation}
M\dot{r}=-\frac 12\dot{M}r-e_n-Kr+\widetilde{N}+\bar{N}-D\left( U-I_m\right)DKr-DUDC\text{Sgn}\left( e_n\right) \label{close}
\end{equation}
where the fact that $DD=I_m$ was utilized.

Before proceeding with the stability analysis, we would like to draw attention to the last two terms of \eqref{close} which we will investigate separately in the next two subsections: 
\subsection{The ``$D\left( U-I_m\right) DKr$'' Term}  
Note that, after utilizing the fact that $\left( U-I_m\right) $ being strictly upper triangular, we can rewrite the term $D\left( U-I_m\right) DKr$ as follows 
\begin{equation}
D\left( U-I_m\right) DKr=\left[ \begin{array}{c} \Lambda +\Phi \\ 0 \end{array} \right] \label{u1}
\end{equation}
where $\Lambda \left( t\right) $, $\Phi \left( t\right) \in \mathbb{R}^{\left( m-1\right) \times 1}$ are auxiliary signals with their entries $\Lambda_i\left( t\right) $, $\Phi _i\left( t\right) \in \mathbb{R}$, $i=1,...,\left( m-1\right) $, being defined as 
\begin{eqnarray}
\Lambda _i &=& d_i \sum\limits_{j=i+1}^m d_j k_j \widetilde{U}_{i,j} r_j  \label{u1a} \\
\Phi _i &=& d_i \sum\limits_{j=i+1}^m d_j k_j \bar{U}_{i,j} r_j  \label{u1b}
\end{eqnarray}
with $\bar{U}_{i,j}\left( X_r\right) $, $\widetilde{U}_{i,j}\left( t\right) \in \mathbb{R}$ are defined as
\begin{eqnarray}
\bar{U}_{i,j} & \triangleq &U_{i,j}|_{X=X_r} \label{Ubound1} \\
\widetilde{U}_{i,j} & \triangleq &U_{i,j}-\bar{U}_{i,j} \label{Ubound2}
\end{eqnarray}
where $U_{i,j}\left( X\right) \in \mathbb{R}$ are the entries of $U\left( X\right) $. Notice from \eqref{u1} that the last entry of the term $D\left(U-I_m\right) DKr$ is equal to $0$, and its $i$--th entry depends on the $\left( i+1\right) $--th to $m$--th entries of the control gain matrix $K$.

\subsection{The ``$DUDC\text{Sgn}\left( e_n\right) $'' Term}
We can rewrite the $DUDC\text{Sgn}\left( e_n\right) $ term as 
\begin{equation}
DUDC\text{Sgn}\left( e_n\right) =\left[ \begin{array}{c} \Psi \\ 0 \end{array} \right] + \Theta \label{u2}
\end{equation}
where $\Psi \left( t\right) \in \mathbb{R}^{\left( m-1\right) \times 1}$ and $\Theta \left( t\right) \in \mathbb{R}^{m\times 1}$ are auxiliary signals defined as 
\begin{eqnarray}
\left[ \begin{array}{c} \Psi \\ 0 \end{array} \right] &=& D\left( U-\bar{U}\right) DC\text{Sgn}\left( e_n\right) \label{u2a} \\
\Theta  &=&D\bar{U}DC\text{Sgn}\left( e_n\right) \label{u2b}
\end{eqnarray}
where $\bar{U}\left( X_r\right) =U|_{X=X_r}\in \mathbb{R}^{m\times m}$ is a function of reference trajectory and its time derivatives, and $\Psi_i\left( t\right) \in \mathbb{R}$, $i=1,...,\left(m-1\right)$ and $\Theta _i\left( t\right) \in \mathbb{R}$, $i=1,...,m$, are defined as 
\begin{eqnarray}
\Psi _i &=& d_i \sum\limits_{j=i+1}^m d_j C_j \widetilde{U}_{i,j} \text{sgn}\left( e_{n,j}\right) \label{Ubound3} \\
\Theta _i &=& d_i \sum\limits_{j=i}^m d_j C_j \bar{U}_{i,j} \text{sgn}\left( e_{n,j}\right) . \label{Ubound4}
\end{eqnarray}

\begin{remark}
The Mean Value Theorem \cite{khalil} can be utilized to develop the following upper bounds 
\begin{eqnarray}
\left\| \widetilde{N}\left( \cdot \right) \right\|  &\leq &\rho _{\widetilde{N}}\left( \left\| z\right\| \right) \left\| z\right\| \label{rho} \\
\left\| \widetilde{U}_{i,j}\left( \cdot \right) \right\|  &\leq &\rho_{i,j}\left( \left\| z\right\| \right) \left\| z\right\| \label{rho2}
\end{eqnarray}
where $\rho _{\widetilde{N}}\left( \cdot \right) $, $\rho _{i,j}\left( \cdot \right) \in \mathbb{R}$ are non--negative, globally invertible, non--decreasing functions of their arguments, and $z\left( t\right) \in \mathbb{R}^{\left[ \left( n+1\right) m\right] \times 1}$ is defined by 
\begin{equation}
z\triangleq \left[ \begin{array}{lllll} e_1^T & e_2^T & ... & e_n^T & r^T \end{array} \right] ^T.  \label{z}
\end{equation}
It can be seen from \eqref{xds2}, \eqref{N}, \eqref{Nr} that $\bar{N}\left( t\right) $ and $\bar{U}_{i,j}\left( t\right) $ are bounded in the sense that \cite{xian-journal} 
\begin{eqnarray}
\left| \bar{N}_i\left( t\right) \right|  &\leq &\zeta _{\bar{N}_i} \label{bound1} \\
\left| \bar{U}_{i,j}\left( t\right) \right|  &\leq &\zeta _{\bar{U}_{i,j}} \label{bound2}
\end{eqnarray}
where $\zeta_{\bar{N}_i}$, $\zeta_{\bar{U}_{i,j}}\in \mathbb{R}$ are positive bounding constants. Based on \eqref{u1a}, \eqref{u1b}, \eqref{Ubound3}, 
\eqref{Ubound4}, following upper bounds can be obtained 
\begin{eqnarray}
\left| \Lambda_i\right| &\leq &\sum\limits_{j=i+1}^mk_j\rho_{i,j}\left( \left\| z\right\| \right) \left\| z\right\| \left| r_j\right| \leq 
\rho_{\Lambda_i}\left( \left\| z\right\| \right) \left\| z\right\| \label{bound4} \\
\left| \Phi_i\right| &\leq &\sum\limits_{j=i+1}^m k_j \zeta _{\bar{U}_{i,j}} \left| r_j\right| \leq \zeta_{\Phi_i}\left\| z\right\| \label{bound3} \\
\left| \Psi_i\right| &\leq &\sum\limits_{j=i+1}^m C_j \rho_{i,j}\left( \left\| z\right\| \right) \left\| z\right\| \leq 
\rho_{\Psi_i}\left( \left\| z\right\| \right) \left\| z\right\| \label{bound5} \\
\left| \Theta_i\right| &\leq &\sum\limits_{j=i}^m C_j \zeta_{\bar{U}_{i,j}} \leq \zeta_{\Theta_i} \label{bound6}
\end{eqnarray}
where \eqref{rho}--\eqref{bound2} were utilized. From \eqref{bound6}, it is easy to see that $\left\| \Theta \right\| \leq \zeta_\Theta $ is satisfied for some positive bounding constant $\zeta_\Theta \in \mathbb{R}$, and from \eqref{bound4}--\eqref{bound5}, we have
\begin{equation}
\left| \Lambda_i\right| +\left| \Phi_i\right| +\left| \Psi_i\right| \leq \rho_i\left( \left\| z\right\| \right) \left\| z\right\| \label{bound7}
\end{equation}
where $\rho_i \left( \left\| z\right\| \right) \in \mathbb{R}$ $i=0,1,...,\left( m-1\right)$, are non--negative, globally invertible, non--decreasing functions satisfying 
\begin{equation}
\rho_{\Lambda_i}+\rho_{\Psi_i}+\zeta_{\Phi_{i}}\leq \rho_i. \label{bound8}
\end{equation}
\end{remark}

\begin{remark} \label{NrBound}
Notice that, as a result of the fact that $\bar{U}\left(t\right) $ being unity upper triangular, $\Theta \left( t\right) $ in \eqref{u2b} can be rewritten as 
\begin{equation}
\Theta =\left( I_m+\Omega \right) C\text{Sgn}\left( e_n\right) \label{bound}
\end{equation}
where $\Omega \left( t\right) \triangleq D\left( \bar{U}-I_m\right) D\in \mathbb{R}^{m\times m}$ is a strictly upper triangular matrix. Since it is a function of the reference trajectory and its time derivatives, its entries, denoted by $\Omega_{i,j}\left( t\right) \in \mathbb{R}$, are bounded in the sense that 
\begin{equation}
\left| \Omega_{i,j}\right| \leq \zeta_{\Omega_{i,j}} \label{bound9}
\end{equation}
where $\zeta_{\Omega_{i,j}} \in \mathbb{R}$ are positive bounding constants.
\end{remark}
At this point, we are now ready to continue with the stability analysis of the proposed robust controller. 

\section{Stability Analysis}
This section, via an initial Lyapunov based analysis  we will first prove the boundedness of the error signals under the closed--loop operation. Using this result  we will then present a lemma and obtain an upper bound for the integral of the absolute values of the entries of $e_n$. This upper bound will later be utilized in another lemma to prove the non--negativity of a Lyapunov--like function that will be used in our final analysis which proves asymptotic stability of the overall closed--loop system. 

\begin{theorem} \label{Theorem1}
\textbf{(Boundedness proof)} For the uncertain MIMO system of \eqref{model1}, the controller in \eqref{u} and \eqref{ub} guarantee the boundedness of the error signals  \eqref{e1}, \eqref{e2}--\eqref{en} and \eqref{r} provided that the control gains $k_{d,i}$ and $k_p$ are chosen large enough compared to the initial conditions of the system and the following condition is satisfied 
\begin{equation}
\lambda_{\min }\left( \alpha \right) \geq \frac 12 \label{GainBound1}
\end{equation}
where the notation $\lambda_{\min }\left( \alpha \right)$ denotes the minimum eigenvalue of the gain matrix $\alpha$, previously defined in \eqref{r}. 
\end{theorem}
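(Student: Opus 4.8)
The plan is to construct a non-negative, radially unbounded Lyapunov function in the full error vector $z$ and show that its derivative along the closed-loop trajectories can be made negative outside a ball whose radius is controlled by the initial conditions, so that $z(t)$, and hence each of $e_1,\dots,e_n,r$, remains bounded. A natural candidate is
\begin{equation}
V=\tfrac12\sum_{i=1}^{n}e_i^Te_i+\tfrac12 r^TMr,
\end{equation}
which by \eqref{M} satisfies $\lambda_1\|z\|^2\le V\le\lambda_2(X)\|z\|^2$ for a positive constant $\lambda_1$ and a positive non-decreasing function $\lambda_2$. I would then differentiate $V$, substituting $\dot e_i$ from the definitions \eqref{e2}--\eqref{en} and \eqref{r}, and replace $M\dot r$ using the closed-loop error system \eqref{close}. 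The $\tfrac12 r^T\dot Mr$ term cancels against the corresponding term in \eqref{close}, the cross term $-e_n^Tr$ coming from differentiating $\tfrac12 e_n^Te_n$ against the $+e_n$ feedback in $r$ is arranged to cancel the $-e_n$ in \eqref{close}, and the telescoping structure of the Fibonacci-type definitions \eqref{e2}--\eqref{en} (this is exactly why the coefficients $1$ and $1$ appear in each recursion) produces, after using \eqref{GainBound1}, a negative semidefinite quadratic form $-\sum_{i=1}^{n-1}c_i\|e_i\|^2-(\lambda_{\min}(\alpha)-\tfrac12)\|e_n\|^2$ in the lower error states, plus the genuinely problematic terms that involve $r$.

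The remaining terms in $\dot V$ are, schematically, $r^T(-Kr+\widetilde N+\bar N)$ together with the contributions $-r^TD(U-I_m)DKr$ and $-r^TDUDC\,\mathrm{Sgn}(e_n)$ coming from the last two terms of \eqref{close}. Here I would invoke the two subsections and the Remark: by \eqref{u1}, \eqref{u2} and \eqref{bound7}, the $i$-th entry of the triangular nuisance terms is bounded by $\rho_i(\|z\|)\|z\|$ for $i=1,\dots,m-1$ and the $m$-th entry of $D(U-I_m)DKr$ vanishes while the $m$-th entry of $DUDC\,\mathrm{Sgn}(e_n)$ is the bounded quantity $\Theta_m$. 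The structure of the gain $K=I_m+k_pI_m+\mathrm{diag}\{k_{d,1},\dots,k_{d,m-1},0\}$ is tailored precisely to this: the $i$-th damping gain $k_{d,i}$ multiplies $|r_i|^2$ and can be chosen to dominate the term $|r_i|\rho_i(\|z\|)\|z\|$ via the standard nonlinear damping (completing-the-square) inequality $|r_i|\rho_i\|z\|\le \tfrac{1}{4k_{d,i}}\rho_i^2\|z\|^2+k_{d,i}|r_i|^2$, while the extra $+I_m$ in $K$ handles $\bar N$ and $\Theta$ through $r^T(\bar N-\Theta)\le\|r\|(\zeta_{\bar N}+\zeta_\Theta)$ and completing the square again. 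The $k_pI_m$ term absorbs $r^T\widetilde N\le\|r\|\rho_{\widetilde N}(\|z\|)\|z\|$ in the same way. Collecting everything, $\dot V\le -c\|z\|^2+\varepsilon$ whenever $\|z\|$ lies in a region determined by the invertibility of the $\rho_i$'s and the size of the gains, which by the standard ultimate-boundedness argument (choosing $k_{d,i},k_p$ large relative to $V(t_0)$, i.e. relative to the initial conditions) confines $z(t)$ to a compact set, giving boundedness of all error signals.

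I expect the main obstacle to be the bookkeeping that shows the state-dependent upper-bounding functions $\rho_i(\|z\|)$ and $\rho_{\widetilde N}(\|z\|)$ can genuinely be dominated. Because these are merely non-decreasing (not linearly bounded), the inequality $\dot V\le -c\|z\|^2+\varepsilon$ only holds on a ball whose radius grows with the gains, so one must run the now-classical semi-global argument: for any prescribed bound on the initial conditions, pick the gains large enough that the resulting region of validity contains the sublevel set $\{V\le\lambda_2 V(t_0)/\lambda_1\}$, and verify this is consistent (the region of attraction expands with the gains faster than the initial sublevel set does). Care is also needed with the fact that $\lambda_2(X)$ and $\bar m(X)$ are state-dependent, so the $V$-bounds must be re-derived on the invariant set once it is known to be compact; this is a standard but slightly delicate fixed-point-style step. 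Everything else — the cancellations, the triangularity exploitation, the completion of squares — is routine once the error-system identities \eqref{close}, \eqref{u1}, \eqref{u2}, \eqref{bound7} are in hand.
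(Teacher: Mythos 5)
Your proposal matches the paper's proof essentially step for step: the same Lyapunov function $V_{1}=\tfrac12\sum_{i=1}^{n}e_{i}^{T}e_{i}+\tfrac12 r^{T}Mr$, the same telescoping of $\sum e_{i}^{T}\dot{e}_{i}$ yielding the $-(\lambda_{\min}(\alpha)-\tfrac12)\Vert e_{n}\Vert^{2}$ term, the same nonlinear-damping completions of squares pairing $k_{p}$ with $\widetilde{N}$ and $k_{d,i}$ with the triangular nuisance bounds $\rho_{i}(\Vert z\Vert)\Vert z\Vert$, and the same constant-plus-quadratic treatment of $\bar{N}$ and $\Theta$ leading to $\dot{V}_{1}\leq -\beta_{1}V_{1}+\delta\varepsilon^{2}$ and uniform ultimate boundedness under a semi-global gain condition. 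No substantive difference from the paper's argument.
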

\begin{proof}
See Appendix \ref{App1}.
\end{proof}

\begin{lemma} \label{Lemma1}
Provided that $e_n\left( t\right)$ and $\dot{e}_n\left( t\right)$ are bounded, the following expression for the upper bound of the integral of the absolute value of the $i$--th entry of $\dot{e}_n\left( t\right)$ $i=1,\cdots,m$ can be obtained
\begin{equation}
\int\limits_{t_0}^t\left| \dot{e}_{n,i}\left( \sigma \right) \right| d\sigma 
\leq \gamma _1+\gamma _2\int\limits_{t_0}^t\left| e_{n,i}\left( \sigma \right) \right| d\sigma +\left| e_{n,i}\right| \label{Lnew}
\end{equation}
where $\gamma _1$, $\gamma _2 \in \mathbb{R}$ are some positive bounding constants.
\end{lemma}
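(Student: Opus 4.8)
The plan is to reduce the claim to an estimate on $\int_{t_0}^{t}\left|r_i(\sigma)\right|d\sigma$ and to extract that estimate from the integral form of the controller, using that $\tau(t)$ is bounded under the stated hypotheses. First, since $e_n$ and $\dot{e}_n$ are bounded, the stable error cascade \eqref{e2}--\eqref{en} (equivalently \eqref{ei}), together with \eqref{e1} and \eqref{xds2}, forces $e_1^{(i)}$, $i=0,\dots,n$, hence $X$ and $x^{(n)}$, to be bounded; then $r=\dot{e}_n+\alpha e_n$ is bounded, $\tau=g^{-1}\!\left(x^{(n)}-h\right)$ is bounded by \eqref{tau1}, \eqref{decomposition} and \eqref{M} with $\tau(t_0)=0_{m\times1}$ by \eqref{u}--\eqref{ub}, and \eqref{close} gives that $\dot{r}$, hence $\ddot{e}_{n,i}=\dot{r}_i-\alpha_i\dot{e}_{n,i}$, is bounded as well. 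Writing \eqref{r} componentwise ($\alpha$ being diagonal) as $\dot{e}_{n,i}=r_i-\alpha_i e_{n,i}$ shows that it suffices to bound $\int_{t_0}^{t}\left|r_i(\sigma)\right|d\sigma$ by a constant plus a multiple of $\int_{t_0}^{t}\left|e_{n,i}(\sigma)\right|d\sigma$ and of $\left|e_{n,i}(t)\right|$, since $\int_{t_0}^{t}\left|\dot{e}_{n,i}\right|d\sigma\leq\int_{t_0}^{t}\left|r_i\right|d\sigma+\alpha_i\int_{t_0}^{t}\left|e_{n,i}\right|d\sigma$.

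Next I would use the controller. By \eqref{r} the bracketed term in \eqref{u} equals $\int_{t_0}^{\sigma}r(s)\,ds$, so premultiplying \eqref{u} by $D$ and using \eqref{ub} and $DD=I_m$ gives $D\tau(t)=K\int_{t_0}^{t}r(\sigma)\,d\sigma+C\int_{t_0}^{t}\text{Sgn}\!\left(e_n(\sigma)\right)d\sigma$. Taking the $i$--th entry and substituting $\int_{t_0}^{t}r_i=e_{n,i}(t)-e_{n,i}(t_0)+\alpha_i\int_{t_0}^{t}e_{n,i}$ yields the identity
\[
C_i\int_{t_0}^{t}\text{sgn}\!\left(e_{n,i}(\sigma)\right)d\sigma=d_i\tau_i(t)-K_i\!\left(e_{n,i}(t)-e_{n,i}(t_0)\right)-K_i\alpha_i\int_{t_0}^{t}e_{n,i}(\sigma)\,d\sigma .
\]
Because $\left|\tau_i(t)\right|$ and $\left|e_{n,i}(t)\right|$ are bounded, this already controls $\left|\int_{t_0}^{t}\text{sgn}(e_{n,i})\,d\sigma\right|$ by a constant plus a multiple of $\int_{t_0}^{t}\left|e_{n,i}\right|d\sigma$ plus $\tfrac{K_i}{C_i}\left|e_{n,i}(t)\right|$.

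The remaining step, which is the main obstacle, is to upgrade this control of the signed signum integral to control of $\int_{t_0}^{t}\left|r_i\right|d\sigma$. I would partition $[t_0,t]$ into the maximal intervals $I_k$ on which $e_{n,i}$ keeps a fixed sign $s_k\in\{-1,+1\}$ (so that $e_{n,i}$ vanishes at the interior endpoints of the $I_k$); on $I_k$ relation \eqref{udot} reads $K_i\dot{e}_{n,i}=d_i\dot{\tau}_i-C_i s_k-K_i\alpha_i e_{n,i}$, and since $\int_{I_k}\dot{e}_{n,i}\,d\sigma=0$ on interior intervals the positive and negative parts of $\dot{e}_{n,i}$ over $I_k$ carry equal mass, so $\int_{I_k}\left|\dot{e}_{n,i}\right|d\sigma$ coincides with the variation of $e_{n,i}$ on $I_k$. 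Integrating \eqref{udot} over $I_k$ and integrating by parts in $e_{n,i}$ (which vanishes at the endpoints of $I_k$) converts the per--interval contributions of $\dot{\tau}_i$ and of the constant $C_i s_k$ into multiples of $\int_{I_k}\left|e_{n,i}\right|d\sigma$ and of boundary values of $e_{n,i}$; the summation over $k$ then has to be carried out so that the accumulated constant stays independent of $t$ and of the number of sign changes, and this is precisely where the boundedness of $\dot{e}_{n,i}$ and $\ddot{e}_{n,i}$ — rather than merely that of $e_n$ and $\dot{e}_n$ — is invoked, since it limits how much variation a sign interval can contribute relative to its $\int_{I_k}\left|e_{n,i}\right|$--mass. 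Identifying $\gamma_1$ and $\gamma_2$ with the resulting bounding constants then completes the argument.
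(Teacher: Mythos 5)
Your route does not close at its pivotal step. The algebra up to the identity $C_i\int_{t_0}^{t}\mathrm{sgn}\left(e_{n,i}\right)d\sigma=d_i\tau_i(t)-K_i\left(e_{n,i}(t)-e_{n,i}(t_0)\right)-K_i\alpha_i\int_{t_0}^{t}e_{n,i}\,d\sigma$ is fine, but a bound on the \emph{signed} signum integral says nothing about $\int_{t_0}^{t}\left|\dot{e}_{n,i}\right|d\sigma$, and the upgrade you sketch via the partition into sign intervals $I_k$ of $e_{n,i}$ does not work as described. Integrating \eqref{udot} over $I_k$ leaves the boundary contribution $d_i\left(\tau_i(\sup I_k)-\tau_i(\inf I_k)\right)$; these terms are individually bounded but do not telescope across intervals, so the ``accumulated constant'' grows with the number of sign changes, which the hypotheses do not control (nothing prevents $e_{n,i}$ from having arbitrarily many zero crossings on $[t_0,t]$). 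Worse, the claim you invoke to rescue the summation---that boundedness of $\dot{e}_{n,i}$ and $\ddot{e}_{n,i}$ limits the variation contributed by a sign interval relative to its $\int_{I_k}\left|e_{n,i}\right|$ mass---is false: an excursion of $e_{n,i}$ from $0$ up to a small height $h$ and back, with $\left|\dot{e}_{n,i}\right|\leq\gamma$, contributes variation $2h$ while $\int_{I_k}\left|e_{n,i}\right|$ can be as small as order $h^{2}/\gamma$, so the ratio is unbounded as $h\to0$ and no uniform $\gamma_2$ comes out of that comparison. Hence the constants $\gamma_1,\gamma_2$ of \eqref{Lnew} are never produced. (Your preliminary chain---bounded $e_n,\dot{e}_n$ implying bounded $e_1^{(i)}$, $X$, $x^{(n)}$, $\tau$, $\dot{r}$, $\ddot{e}_{n,i}$---is plausible but each link needs an argument; it is also more than the lemma's hypotheses hand you, and it turns out to be unnecessary.)

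For comparison, the paper's proof uses no controller structure at all: the lemma is treated as a generic statement about a bounded scalar signal with bounded derivative. After disposing of the trivial case $e_{n,i}\equiv0$ and assuming $\left|e_{n,i}\right|$ stays away from zero on $[t_0,t]$, one takes $T$ to be the last instant at which $\dot{e}_{n,i}$ changes sign; on $[T,t]$ the sign is constant, so $\int_{T}^{t}\left|\dot{e}_{n,i}\right|d\sigma=\left|e_{n,i}(t)-e_{n,i}(T)\right|$, while on $[t_0,T]$ one bounds $\int_{t_0}^{T}\left|\dot{e}_{n,i}\right|d\sigma\leq\gamma\left(T-t_0\right)$ and compares $(T-t_0)$ with $\int_{t_0}^{T}\left|e_{n,i}\right|d\sigma$ via the integral mean value theorem, giving $\gamma_2=\gamma/e_{n,i_{*}}$ and $\gamma_1=\sup\left|e_{n,i}(T)\right|$ (this is the argument of \cite{Stepanyan-TNN}). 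If you want to repair your write-up, drop the detour through $\tau$ and the control law---it adds machinery the lemma does not use---and follow this elementary time-splitting argument instead; as it stands, your central inequality remains unproven.
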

\begin{proof}
 The proof is very similar to that of the one given in \cite{Stepanyan-TNN}, however for the completeness of the presentation we have included it in Appendix \ref{App2}.
\end{proof}

\begin{lemma} \label{Lemma2}
Consider the term 
\begin{equation}
L \triangleq r^T\left( \bar{N}-\left( I_m+\Omega \right) C\text{Sgn}\left( e_n\right) \right)   \label{L}
\end{equation}
where $\Omega \left( t\right) $ defined in \eqref{bound} is strictly upper triangular and also is a function of desired trajectory and its time derivatives; thus it is bounded. Provided that the entries of the control gain $C$ are chosen to satisfy 
\begin{eqnarray}
C_m &\geq & \zeta _{\bar{N}_m}\left( 1+\frac{\gamma _2}{\alpha _m}\right) \label{Lgain1} \\
C_i &\geq & \left( \zeta _{\bar{N}_i}+\sum\limits_{j=i+1}^m\zeta _{\Omega _{i,j}}C_j\right) \left( 1+\frac{\gamma _2}{\alpha _i}\right) \text{ , } i=1,...,\left(m-1\right) \label{Lgain2}
\end{eqnarray}
then it can be concluded that
\begin{equation}
\int\limits_{t_0}^tL\left( \sigma \right) d\sigma \leq \zeta _L \label{Lproof}
\end{equation}
where $\zeta _L\in\mathbb{R}$ is a positive bounding constant defined as
\begin{equation}
\zeta _L \triangleq \gamma _1\sum\limits_{i=1}^{m-1}\sum\limits_{j=i+1}^m\zeta _{\Omega_{i,j}}C_j+\gamma _1\sum\limits_{i=1}^m\zeta _{\bar{N}_i} +\sum\limits_{i=1}^mC_i\left| e_{n,i}\left( t_0\right) \right| . \label{Lconstant}
\end{equation}
\end{lemma}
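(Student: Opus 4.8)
The plan is to substitute the filtered--error relation $r=\dot{e}_n+\alpha e_n$ of \eqref{r} into \eqref{L}, integrate the result from $t_0$ to $t$ one entry at a time, and exploit the strictly upper triangular structure of $\Omega$ together with the recursive gain conditions \eqref{Lgain1}--\eqref{Lgain2} to annihilate every non--constant term. Since the $i$--th entry of $\left(I_m+\Omega\right)C\,\text{Sgn}\left(e_n\right)$ equals $C_i\,\text{sgn}\left(e_{n,i}\right)+\sum_{j=i+1}^{m}\Omega_{i,j}C_j\,\text{sgn}\left(e_{n,j}\right)$, one can write
\[
L=\sum_{i=1}^{m}\left(\dot{e}_{n,i}+\alpha_i e_{n,i}\right)\!\left(\bar{N}_i-C_i\,\text{sgn}\left(e_{n,i}\right)-\sum_{j=i+1}^{m}\Omega_{i,j}C_j\,\text{sgn}\left(e_{n,j}\right)\right),
\]
where $\alpha_i$ denotes the $i$--th diagonal entry of $\alpha$.

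First I would treat the ``diagonal'' contribution. Since $e_{n,i}$ is absolutely continuous, $\dot{e}_{n,i}\,\text{sgn}\left(e_{n,i}\right)=\tfrac{d}{dt}\left|e_{n,i}\right|$ almost everywhere and $e_{n,i}\,\text{sgn}\left(e_{n,i}\right)=\left|e_{n,i}\right|$, so integrating $-\left(\dot{e}_{n,i}+\alpha_i e_{n,i}\right)C_i\,\text{sgn}\left(e_{n,i}\right)$ yields exactly $-C_i\left(\left|e_{n,i}\left(t\right)\right|-\left|e_{n,i}\left(t_0\right)\right|\right)-\alpha_i C_i\int_{t_0}^{t}\left|e_{n,i}\right|d\sigma$. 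For the $\bar{N}_i$ contribution I split $r_i=\dot{e}_{n,i}+\alpha_i e_{n,i}$: the $\alpha_i e_{n,i}\bar{N}_i$ part integrates to at most $\alpha_i\zeta_{\bar{N}_i}\int_{t_0}^{t}\left|e_{n,i}\right|d\sigma$ by \eqref{bound1}, whereas $\int_{t_0}^{t}\dot{e}_{n,i}\bar{N}_i\,d\sigma\leq\zeta_{\bar{N}_i}\int_{t_0}^{t}\left|\dot{e}_{n,i}\right|d\sigma$, at which point Lemma \ref{Lemma1}---applicable because $e_n$ and $\dot{e}_n$ are bounded by Theorem \ref{Theorem1}---turns the right--hand side into $\zeta_{\bar{N}_i}\big(\gamma_1+\gamma_2\int_{t_0}^{t}\left|e_{n,i}\right|d\sigma+\left|e_{n,i}\right|\big)$. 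The cross terms $-\left(\dot{e}_{n,i}+\alpha_i e_{n,i}\right)\sum_{j>i}\Omega_{i,j}C_j\,\text{sgn}\left(e_{n,j}\right)$ are handled the same way, using $\left|\Omega_{i,j}\right|\leq\zeta_{\Omega_{i,j}}$ from \eqref{bound9} to replace them by $\big(\sum_{j>i}\zeta_{\Omega_{i,j}}C_j\big)$ times $\left|\dot{e}_{n,i}\right|$ or $\left|e_{n,i}\right|$, followed by a second appeal to Lemma \ref{Lemma1} on the derivative part.

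Collecting all of the above, $\int_{t_0}^{t}L\,d\sigma$ is bounded above by a constant plus, for each $i$, a term $\big(\zeta_{\bar{N}_i}+\sum_{j>i}\zeta_{\Omega_{i,j}}C_j-C_i\big)\left|e_{n,i}\left(t\right)\right|$ and a term $\big(\left(\alpha_i+\gamma_2\right)\big(\zeta_{\bar{N}_i}+\sum_{j>i}\zeta_{\Omega_{i,j}}C_j\big)-\alpha_i C_i\big)\int_{t_0}^{t}\left|e_{n,i}\right|d\sigma$. Condition \eqref{Lgain2}---and \eqref{Lgain1} for $i=m$, where the $\Omega$--sum is empty---is precisely what makes both coefficients nonpositive: because $1+\gamma_2/\alpha_i\geq 1$, the single inequality $C_i\geq\big(\zeta_{\bar{N}_i}+\sum_{j>i}\zeta_{\Omega_{i,j}}C_j\big)\left(1+\gamma_2/\alpha_i\right)$ dominates both the pointwise coefficient and the integral coefficient, so those two terms may be discarded. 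What remains is exactly $\gamma_1\sum_{i=1}^{m}\zeta_{\bar{N}_i}+\gamma_1\sum_{i=1}^{m-1}\sum_{j=i+1}^{m}\zeta_{\Omega_{i,j}}C_j+\sum_{i=1}^{m}C_i\left|e_{n,i}\left(t_0\right)\right|$, namely the constant $\zeta_L$ of \eqref{Lconstant}, which establishes \eqref{Lproof}.

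The main obstacle I anticipate is organizational rather than conceptual: carrying the recursive dependence on $C_j$, $j>i$, cleanly through the two invocations of Lemma \ref{Lemma1}, checking that the lone gain bound \eqref{Lgain2} simultaneously controls the coefficient of $\left|e_{n,i}\left(t\right)\right|$ and that of $\int_{t_0}^{t}\left|e_{n,i}\right|d\sigma$, and justifying the almost--everywhere identity $\tfrac{d}{dt}\left|e_{n,i}\right|=\text{sgn}\left(e_{n,i}\right)\dot{e}_{n,i}$ so that the fundamental theorem of calculus may be applied to the absolutely continuous map $t\mapsto\left|e_{n,i}\left(t\right)\right|$.
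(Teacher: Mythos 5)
Your proposal is correct and follows essentially the same route as the paper's proof: substitute $r=\dot{e}_n+\alpha e_n$ into \eqref{L}, integrate the diagonal signum term exactly, bound the $\bar{N}$ and $\Omega$ contributions via \eqref{bound1} and \eqref{bound9}, invoke Lemma \ref{Lemma1} on the $\left|\dot{e}_{n,i}\right|$ integrals, and use \eqref{Lgain1}--\eqref{Lgain2} to make the coefficients of $\int_{t_0}^{t}\left|e_{n,i}\right|d\sigma$ and $\left|e_{n,i}\left(t\right)\right|$ nonpositive, leaving exactly $\zeta_L$. Your collected coefficients and the observation that $1+\gamma_2/\alpha_i\geq 1$ lets the single gain condition handle both terms match the paper's inequality \eqref{Lint3} and its gain-selection argument.
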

\begin{proof}
See Appendix \ref{App3}.
\end{proof}

\begin{theorem} \label{Theorem2}
\textbf{(Asymptotic convergence proof)} Given the  uncertain MIMO nonlinear system of the form \eqref{model1}, the continuous robust controller of \eqref{u} and \eqref{ub} ensures that all closed--loop signals remain bounded and the tracking error signals converges to zero asymptotically in the sense that
\begin{equation*}
e_{1}^{\left( i\right) } \to 0 \text{ as } t\rightarrow +\infty \text{ , } \forall i=0,\cdots ,n
\end{equation*}
 provided that $\alpha$ is chosen to satisfy \eqref{GainBound1}, the entries of $C$ are chosen to satisfy \eqref{Lgain1} and \eqref{Lgain2}, and $k_{d,i}$ and $k_p$ are chosen large enough.
\end{theorem}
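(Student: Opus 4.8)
The plan is to finish with a single composite Lyapunov analysis for the aggregate error $z$ of \eqref{z}, now that Theorem~\ref{Theorem1} has placed $z\in\mathcal{L}_\infty$ and Lemmas~\ref{Lemma1}--\ref{Lemma2} are available. First I would introduce $P\left(t\right)\triangleq\zeta_L-\int_{t_0}^tL\left(\sigma\right)d\sigma$ with $L$ as in \eqref{L}; by \eqref{Lproof} one has $P\left(t\right)\geq0$ for all $t\geq t_0$ (and $P\left(t_0\right)=\zeta_L$), which is exactly where the gain conditions \eqref{Lgain1}--\eqref{Lgain2} required by Lemma~\ref{Lemma2} enter. Then I would take the nonnegative function $V\triangleq\tfrac12\sum_{i=1}^ne_i^Te_i+\tfrac12r^TMr+P$, bounded below by virtue of $\underline m>0$ in \eqref{M} and $P\geq0$. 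Differentiating along the closed loop \eqref{close} (where the derivative exists, $\mathrm{Sgn}$ being discontinuous), and substituting $\dot e_i=e_{i+1}-e_i-e_{i-1}$ (with $e_0\triangleq0_{m\times1}$) for $i<n$ from \eqref{e2}--\eqref{en}, $\dot e_n=r-\alpha e_n$ from \eqref{r}, the decompositions \eqref{u1} and \eqref{u2}--\eqref{bound}, and $\dot P=-L$, almost every term telescopes: the Fibonacci cross terms $\pm e_i^Te_{i+1}$ cancel in pairs, the $\pm e_n^Tr$ pair cancels the $-e_n$ feedback term, and the $L$ produced by $r^TM\dot r+\tfrac12r^T\dot Mr$ is annihilated by $\dot P$, leaving
\begin{equation*}
\dot V=-\sum_{i=1}^{n-1}\left\Vert e_i\right\Vert^2+e_{n-1}^Te_n-e_n^T\alpha e_n-r^TKr+r^T\widetilde N-\sum_{i=1}^{m-1}r_i\left(\Lambda_i+\Phi_i+\Psi_i\right).
\end{equation*}

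Next I would make $\dot V$ negative definite in $z$. The condition \eqref{GainBound1}, $\lambda_{\min}\left(\alpha\right)\geq\tfrac12$, is precisely what turns $-\sum_{i=1}^{n-1}\Vert e_i\Vert^2+e_{n-1}^Te_n-e_n^T\alpha e_n$ into a negative definite form in $\left(e_1,\ldots,e_n\right)$: bounding the lone residual cross term as $e_{n-1}^Te_n\leq\tfrac{1}{2\epsilon}\Vert e_{n-1}\Vert^2+\tfrac{\epsilon}{2}\Vert e_n\Vert^2$ with a weight $\epsilon\in\left(\tfrac12,1\right)$ leaves strictly negative coefficients on both $\Vert e_{n-1}\Vert^2$ and $\Vert e_n\Vert^2$, so this part is $\leq-\lambda_0\sum_{i=1}^n\Vert e_i\Vert^2$ for some $\lambda_0>0$. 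For the perturbation terms I would use nonlinear damping: with $K=I_m+k_pI_m+\text{diag}\{k_{d,1},\ldots,k_{d,(m-1)},0\}$, the term $-\sum_{i=1}^{m-1}k_{d,i}r_i^2$ absorbs $\sum_{i=1}^{m-1}r_i\left(\Lambda_i+\Phi_i+\Psi_i\right)$ after a componentwise completion of squares using \eqref{bound7}, and $-\left(1+k_p\right)\Vert r\Vert^2$ absorbs $r^T\widetilde N$ via \eqref{rho}, the two leftovers together being $\bigl(\tfrac{\rho_{\widetilde N}^2\left(\Vert z\Vert\right)}{4k_p}+\sum_{i=1}^{m-1}\tfrac{\rho_i^2\left(\Vert z\Vert\right)}{4k_{d,i}}\bigr)\Vert z\Vert^2$, where $\Vert z\Vert^2=\sum_{i=1}^n\Vert e_i\Vert^2+\Vert r\Vert^2$. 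Because Theorem~\ref{Theorem1} supplies an a~priori bound $\Vert z\left(t\right)\Vert\leq\bar z$ depending only on the initial conditions and $\rho_{\widetilde N},\rho_i$ are nondecreasing, choosing $k_p,k_{d,i}$ large enough relative to $\bar z$ yields $\dot V\leq-\lambda\Vert z\Vert^2\leq0$ for some $\lambda>0$; hence $V\in\mathcal{L}_\infty$ (re-confirming $e_i,r,P\in\mathcal{L}_\infty$) and, upon integration, $z\in\mathcal{L}_2$.

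Finally I would close with a Barbalat-type step. All closed-loop signals being bounded, \eqref{close}, the bound in \eqref{M}, smoothness of $h$ and $g$ with $X,\dot X\in\mathcal{L}_\infty$ (so $\dot M,\widetilde N,\bar N\in\mathcal{L}_\infty$), and \eqref{udot} give $\dot r\in\mathcal{L}_\infty$, whence $\dot z\in\mathcal{L}_\infty$ and $\Vert z\Vert^2$ is uniformly continuous; with $\Vert z\Vert^2\in\mathcal{L}_1$ this forces $z\to0$, i.e. $e_1,\ldots,e_n,r\to0$. The convergence then propagates to the derivatives through \eqref{e2}--\eqref{en}: $e_1,e_2\to0\Rightarrow\dot e_1\to0$, then $e_3\to0\Rightarrow\ddot e_1\to0$, and inductively $e_1^{\left(i\right)}\to0$ for $i=0,\ldots,n-1$; lastly $\dot e_n=r-\alpha e_n\to0$ together with $a_{n,(n-1)}=1$ in \eqref{ei} gives $e_1^{\left(n\right)}\to0$. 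Boundedness of $\tau$ follows from \eqref{tau1}, the bound in \eqref{M} on $M=S^{-1}$, and the unit-upper-triangular structure of $U$. I expect the main obstacle to be the bookkeeping that collapses $\dot V$ to $-\lambda\Vert z\Vert^2$: confirming that the Fibonacci cross terms cancel down to the single residual $e_{n-1}^Te_n$, treating the borderline case $\lambda_{\min}\left(\alpha\right)=\tfrac12$ with the correct Young's weight $\epsilon\in\left(\tfrac12,1\right)$, and keeping the semiglobal logic consistent, since the required gain thresholds inherit their dependence on the initial conditions from the bound $\bar z$ of Theorem~\ref{Theorem1}.
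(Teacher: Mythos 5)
Your proposal follows essentially the same route as the paper: the nonnegative accumulator $P=\zeta_L-\int_{t_0}^tL\,d\sigma$ built on Lemma \ref{Lemma2}, the composite function $V=V_1+P$ whose derivative collapses (via $\dot P=-L$ and the nonlinear damping from $k_p$, $k_{d,i}$) to $\dot V\leq-\lambda\Vert z\Vert^2$ on the region fixed by the gains, followed by a Barbalat/Khalil Theorem 8.4 step and the recursive propagation through \eqref{e2}--\eqref{en} and \eqref{r}, with the semi-global gain selection tied to the initial-condition bound from Theorem \ref{Theorem1}. Your Young's-inequality weight $\epsilon\in\left(\tfrac12,1\right)$ for the borderline case $\lambda_{\min}\left(\alpha\right)=\tfrac12$ is a minor tightening of the paper's bookkeeping, not a different method.
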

\begin{proof}
See Appendix \ref{App4}.
\end{proof}

\section{Numerical Results}

In order to substantiate the theoretical results, numerical analysis has been carried out. Similar to \cite{ChenOFB} the performance and liability of the proposed nonlinear robust controller has been tested on a two--link robot manipulator system with coupling between the two links taken from \cite{slotine}. The equations of motion are given as \cite{ChenOFB}
\begin{equation}
\left[ \begin{array}{l} \tau _1^{*} \\ \tau _2^{*} \end{array} \right] = \left[ \begin{array}{ll} H_{11} & H_{12} \\ H_{12} & H_{22} \end{array} \right] 
\left[ \begin{array}{l} \ddot{q}_1 \\ \ddot{q}_2 \end{array} \right] 
+ \left[ \begin{array}{ll} -h\dot{q}_2 & -h(\dot{q}_1+\dot{q}_2) \\ -h\dot{q}_1 & 0 \end{array} \right] 
\left[ \begin{array}{l} \dot{q}_1 \\ \dot{q}_2 \end{array} \right] \label{m1}
\end{equation}
where $q_1\left(t\right)$, $q_2\left(t\right) \in \mathbb{R}$ denote the positions of the joint angles, and $H_{11}$, $H_{12}$, $H_{22}$ and $h$ are explicitly defined  as
\begin{eqnarray}
H_{11} &=& a_1+2a_3\cos q_2+2a_4\sin q_2 \\ 
H_{12} &=& a_2+a_3\cos q_2+a_4\sin q_2 \\ 
H_{22} &=& a_2 \\
h &=& a_3\sin q_2-a_4\sin q_2 . \label{m2}
\end{eqnarray}
where $a_1=4.42$, $a_2=0.97$, $a_3=1.04$ and $a_4=0.6$. The term $\left[ \begin{array}{cc} \tau _1^{*} & \tau _2^{*} \end{array} \right]^T$ in \eqref{m1} is obtained as
\begin{equation}
\left[ \begin{array}{l} \tau _1^{*} \\ \tau _2^{*} \end{array} \right] = \bar{\beta} (q_1,q_2)\left[ \begin{array}{ll} 1 & 1 \\ 0 & 1 \end{array} \right] 
\left[ \begin{array}{l} \tau _1 \\ \tau _2 \end{array} \right] \label{m3}
\end{equation}
where $\tau_1\left(t\right)$ and $\tau _2\left(t\right)$ are the control inputs, and $\bar{\beta} = H_{11}H_{22}-H_{12}^2\in \mathbb{R}$. The robot manipulator's initial positions have been set to $q\left(0\right)=[\begin{array}{ll} 10 & 10 \end{array}]^T(\deg )$ and $\dot{q}\left(0\right)=[ \begin{array}{ll} 0 & 0 \end{array} ]^T(\deg .s^{-1})$. The control objective is to make $q_1\left( t \right)$ and $q_2\left( t \right)$ follow a sinusoidal desired trajectory chosen as 
\begin{equation}
q_d(t)=(1-\exp (-0.3t^3))[ \begin{array}{ll} 30\sin (t) & 45\sin (t) \end{array} ]^T (\deg ) . \label{m4}
\end{equation}

After a rough tuning process the control gains are selected as follows 
\begin{equation}
\alpha =diag\{1,5\},~~K=diag\{175,125\} \text{ and \ }C=5.  \label{gains}
\end{equation}%
where $diag\left\{ \cdot \right\} $ is used to represent a diagonal matrix.

\begin{remark}
Though in the simulations the tuning process did not require the separation of gains $k_{d,i}$ and $k_p$ from $ K$ on an actual system this might be needed. In these cases, adjusting control gains  might become a bit tricky. We advice the users to adjust the entries of $C$ first, then based on $C$ adjust $k_p$, and finally work on $k_d$ based on $k_p$. 
\end{remark}

The link position tracking error is depicted in Figure \ref{error}, while the control input is shown in Figure \ref{torque}. Simulation results confirm that the proposed controller meets the position tracking objective and asymptotic tracking is achieved.  
\begin{figure}[ht]
\centering
\includegraphics[width=5.0in]{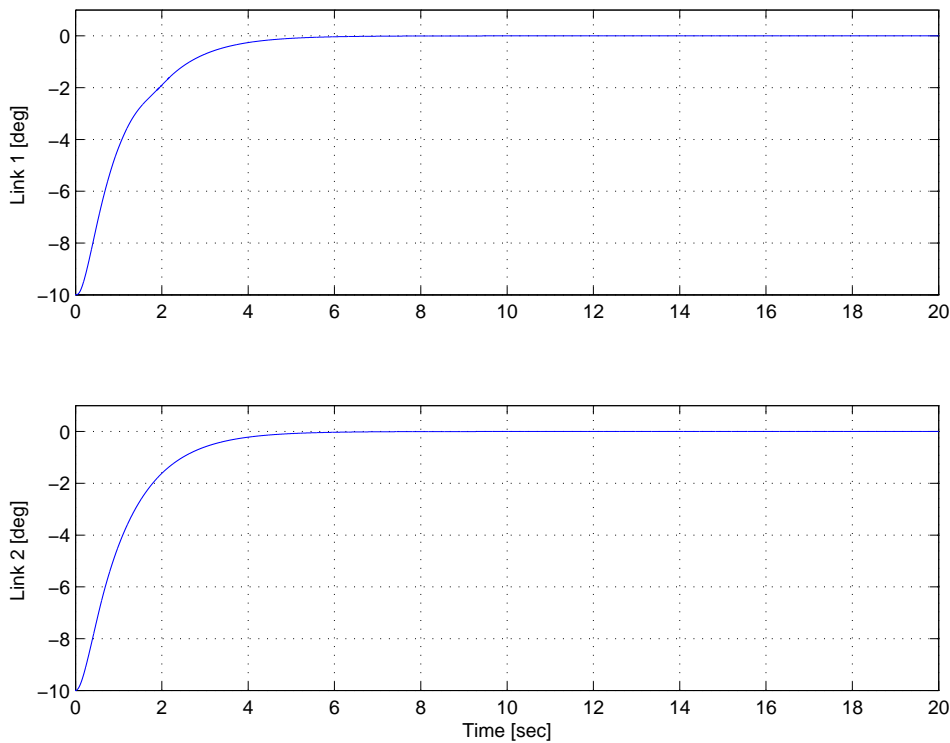}
\caption{Link Tracking Errors}
\label{error}
\end{figure}
\begin{figure}[ht]
\centering
\includegraphics[width=5.0in]{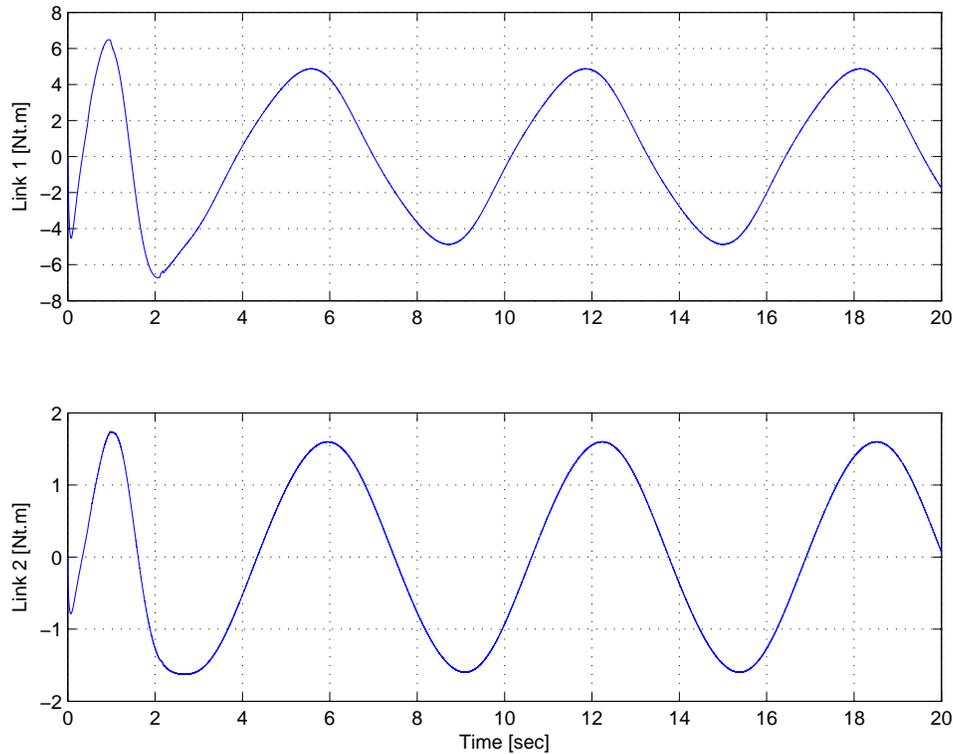}
\caption{Control Inputs}
\label{torque}
\end{figure}

\section{Conclusion}
In this work, for a class of uncertain MIMO systems having non--zero leading principle minors in their input gain matrix, a continuous nonlinear robust controller have been proposed. Via the use of Lyapunov based arguments in conjunction with an integral inequality we were able to obtain semi--global asymptotic tracking. The proposed controller can compensate the uncertainties though the system under a smoothness assumption on the system uncertainties. Simulation results are also included to illustrate the viability and the performance of the proposed method. 

\bibliographystyle{IEEEtran}
\bibliography{IEEEabrv,Refs}

\newpage
\appendices

\section{Proof of Theorem \protect\ref{Theorem1}}\label{App1}

\begin{proof} 
The non--negative function $V_{1}\left( z\right) \in \mathbb{R}$ is defined
as 
\begin{equation}
V_{1}\triangleq \frac{1}{2}\sum\limits_{i=1}^{n}e_{i}^{T}e_{i}+\frac{1}{2}%
r^{T}Mr.  \label{V}
\end{equation}%
By utilizing \eqref{M}, \eqref{V} can be bounded in the following manner 
\begin{equation}
\frac{1}{2}\min \left\{ 1,\underline{m}\right\} \left\Vert z\right\Vert
^{2}\leq V_{1}\left( z\right) \leq \frac{1}{2}\max \left\{ 1,\bar{m}\left(
\left\Vert z\right\Vert \right) \right\} \left\Vert z\right\Vert ^{2},
\label{Vbound}
\end{equation}%
where $z(t)$ was defined in \eqref{z}, and the terms  $\underline{m}$ , $\bar{m}\left(
\left\Vert z\right\Vert \right) $ were defined in \eqref{M}. Taking the  time derivative of \eqref{V} yields   
\begin{equation}
\dot{V}_{1}=\sum\limits_{i=1}^{n}e_{i}^{T}\dot{e}_{i}+r^{T}M\dot{r}+\frac{1}{%
2}r^{T}\dot{M}r.  \label{Vdot2}
\end{equation}%
The first term in the above expression can be written as follows 
\begin{eqnarray}
\sum\limits_{i=1}^{n}e_{i}^{T}\dot{e}_{i} &=&e_{1}^{T}\left(
e_{2}-e_{1}\right) +e_{2}^{T}\left( e_{3}-e_{2}-e_{1}\right)   \notag \\
&&+e_{3}^{T}\left( e_{4}-e_{3}-e_{2}\right) +...  \notag \\
&&+e_{n-1}^{T}\left( e_{n}-e_{n-1}-e_{n-2}\right) +e_{n}^{T}\left( r-\alpha
e_{n}\right)   \notag \\
&=&-\sum%
\limits_{i=1}^{n-1}e_{i}^{T}e_{i}+e_{n-1}^{T}e_{n}+e_{n}^{T}r-e_{n}^{T}%
\alpha e_{n}  \label{Vdot3}
\end{eqnarray}%
where \eqref{e2}--\eqref{en}, \eqref{r} were utilized. Substituting \eqref{close}--\eqref{u1b}, \eqref{u2}--\eqref{u2b} and \eqref{Vdot3} into \eqref{Vdot2} results in 
\begin{eqnarray}
\dot{V}_{1}
&=&-\sum%
\limits_{i=1}^{n-1}e_{i}^{T}e_{i}+e_{n-1}^{T}e_{n}+e_{n}^{T}r-e_{n}^{T}\alpha e_{n}  \notag \\
&&+r^{T}\left( -\frac{1}{2}\dot{M}r-e_{n}-Kr+\widetilde{N}+\bar{N}\right) \notag \\
&&-r^{T}\left[ \begin{array}{c} \Lambda +\Phi \\ 0 \end{array} \right] -r^{T}\left[ \begin{array}{c} \Psi \\ 0 \end{array} \right] 
-r^{T}\Theta +\frac{1}{2}r^{T}\dot{M}r  \label{Vdot4}
\end{eqnarray}%
which, after substituting the control gain matrix, can be rewritten as 
\begin{eqnarray}
\dot{V}_{1}
&=&-\sum\limits_{i=1}^{n-1}e_{i}^{T}e_{i}+e_{n-1}^{T}e_{n}-e_{n}^{T}\alpha e_{n}-r^{T}r  \notag \\
&&+\left[ r^{T}\widetilde{N}-k_{p}r^{T}r\right] +\left[ - \sum \limits_{i=1}^{m-1}r_{i}\left( \Lambda _{i}+\Psi _{i}+\Phi _{i}\right)
-\sum\limits_{i=1}^{m-1}k_{d,i}r_{i}^{2}\right] \notag \\
&&+r^{T}\bar{N}-r^{T}\Theta . \label{Vdot5}
\end{eqnarray}%
After completing the squares in bracketed terms, bounding $\bar{N}\left( t\right) $ and $\Theta \left( t\right) $ with constants, and utilizing $e_{n-1}^{T}e_{n}\leq 1/2\left\Vert e_{n-1}\right\Vert ^{2}+1/2\left\Vert e_{n}\right\Vert ^{2}$, we obtain 
\begin{eqnarray}
\dot{V}_{1} &\leq &-\sum_{i=1}^{n-2}\left\Vert e_{i}\right\Vert ^{2}-\frac{1}{2}\left\Vert e_{n-1}\right\Vert ^{2}-\left( \lambda _{\min }\left( \alpha
\right) -\frac{1}{2}\right) \left\Vert e_{n}\right\Vert ^{2}-r^{T}r  \notag \\
&&+\frac{\rho _{\widetilde{N}}^{2}\left( \left\Vert z\right\Vert \right) }{4k_{p}}\left\Vert z\right\Vert ^{2}+\sum\limits_{i=1}^{m-1}\frac{\rho_{i}^{2}}{4k_{d,i}} \left\Vert z\right\Vert ^{2}+\left\Vert r\right\Vert \zeta _{\bar{N}}+\left\Vert r\right\Vert \zeta _{\Theta }  \label{Vdot1A}
\end{eqnarray}%
which can then be rearranged as 
\begin{equation}
\dot{V}_{1}\leq -\left( \lambda _{1}-\frac{\rho _{\widetilde{N}}^{2}\left(
\left\Vert z\right\Vert \right) }{4k_{p}}-\sum\limits_{i=1}^{m-1}\frac{\rho
_{i}^{2}\left( \left\Vert z\right\Vert \right) }{4k_{d,i}}\right)
\left\Vert z\right\Vert ^{2}+\delta \varepsilon ^{2}  \label{Vdot2A}
\end{equation}%
where $\lambda _{1}\triangleq \min \left\{ \frac{1}{2},\lambda _{\min
}\left( \alpha \right) -\frac{1}{2},1-\frac{1}{4\delta }\right\} $, $\delta
\in \mathbb{R}$ is a positive bounding constant, $\varepsilon \triangleq
\zeta _{\bar{N}}+\zeta _{\Theta }$, and $\left\Vert r\right\Vert \varepsilon
\leq \frac{1}{4\delta }\left\Vert r\right\Vert ^{2}+\delta \varepsilon ^{2}$
was utilized. When the  controller gains $k_{d,i}$ and $k_{p}$ are selected large enough
(compared to the initial conditions of $z\left( t\right) $), and utilizing  \eqref{Vbound},  the following inequality can be obtained 
\begin{equation}
\dot{V}_{1}\leq -\beta _{1} V_1+\delta \varepsilon
^{2}  \label{V1Bound1}
\end{equation}%
where $\beta _{1}\in \mathbb{R}$ is a positive constant. From \eqref{V}, and \eqref{V1Bound1}, we can conclude that $V_1(t)$, therefore $e_{i}\left(
t\right) $  for  $ i=1,...,n$ and  $r\left( t\right) $ are uniformly ultimately bounded.
\end{proof}

\section{Proof of Lemma \ref{Lemma1}} \label{App2}

\begin{proof}
First, we note that if $e_{n,i}(t)\equiv 0$ on some interval, then $\dot{e}_{n,i}(t)\equiv 0$ on the same interval, and the inequality \eqref{Lnew} yields this qualification. Therefore, without loss of generality, we assume that $e_{n,i}\left(t\right)$ is absolutely greater than zero on
the interval of $\left[ t_0,t\right] $. Let $T\in \left[ t_0,t\right) $ be the last instant of time when $\dot{e}_{n,i}\left(t\right)$ changes sign. Then, on the interval $\left[ T,t\right] $, $\dot{e}_{n,i}\left(t\right)$ has a constant sign, hence
\begin{equation}
\int\nolimits_T^t\left| \dot{e}_{n,i}\left(\sigma\right) \right| d\sigma = \left| \int\nolimits_T^t\dot{e}_{n,i}\left(\sigma\right) d\sigma \right|
= \left| e_{n,i}\left(t\right)-e_{n,i}\left(T\right)\right| .  \label{p1}
\end{equation}
From the boundedness of the function $\dot{e}_{n,i}(t)$, it follows that there exist a constant $\gamma >0$ such that $\left| \dot{e}_{n,i}\left(t\right)\right| \leq \gamma $, therefore
\begin{equation}
\int\nolimits_{t_0}^T\left| \dot{e}_{n,i}\left(\sigma\right)\right| d\sigma \leq \gamma \left(T-t_0\right).  \label{p2}
\end{equation}
On the other hand, we obtain the following equality from the application of the Mean Value Theorem \cite{khalil}
\begin{equation}
\int\nolimits_{t_0}^T\left| e_{n,i}\left(\sigma\right)\right| d\sigma =e_{n,i_{*}}\left(T-t_0\right) \label{p3}
\end{equation}
where $e_{n,i_{*}}$ is some intermediate value of $\left| e_{n,i}(t)\right| $ on the interval $\left[ t_0,T\right] $. By assumption, $e_{n,i_{*}}$ is
bounded away from zero. Therefore we can conclude as follows by using inequality \eqref{p2} and equality \eqref{p3}
\begin{equation}
\int\nolimits_{t_0}^T\left| \dot{e}_{n,i}\left(\sigma\right)\right| d\sigma \leq \gamma _2\int\nolimits_{t_0}^T\left| e_{n,i}\left(\sigma\right)\right| d\sigma \label{p4}
\end{equation}
where $\gamma _2= \gamma /e_{n,i_{*}} $. Combining the relationships in \eqref{p1} and \eqref{p4}, we can write
\begin{equation}
\int\nolimits_{t_0}^t\left| \dot{e}_{n,i}\left(\sigma\right) \right| d\sigma \leq \left| e_{n,i}\left(t\right)\right| +\left| e_{n,i}\left(T\right)\right| + \gamma_2\int\nolimits_{t_0}^T\left| e_{n,i}\left(\sigma\right)\right| d\sigma  \label{p5}
\end{equation}
which yields the inequality \eqref{Lnew} with definition $\gamma _1 \triangleq \sup\left| e_{n,i} \left(T\right)\right| $. 
\end{proof}

\section{Proof of Lemma \ref{Lemma2}} \label{App3}

\begin{proof}
We start our analysis by integrating \eqref{L} in time from $t_0$ to $t$
\begin{eqnarray}
\int\limits_{t_0}^tL\left( \sigma \right) d\sigma 
&=&\int\limits_{t_0}^te_n^T\left(\sigma\right)\alpha^T\left(\bar{N}\left(\sigma\right)-C\text{Sgn}\left(e_n\left(\sigma\right)\right)\right)d\sigma \nonumber\\
&&-\int\limits_{t_0}^te_n^T\left(\sigma\right)\alpha^T\Omega\left(\sigma\right)C\text{Sgn}\left( e_n\left( \sigma \right) \right) d\sigma \nonumber \\
&&+\int\limits_{t_0}^t\dot{e}_n^T\left( \sigma \right) \bar{N}\left( \sigma\right) d\sigma 
-\int\limits_{t_0}^t\dot{e}_n^T\left( \sigma \right) \Omega \left( \sigma \right) C\text{Sgn}\left( e_n\left( \sigma \right) \right) d\sigma \nonumber \\
&&-\int\limits_{t_0}^t\dot{e}_n^T\left( \sigma \right) C\text{Sgn}\left( e_n\left( \sigma \right) \right) d\sigma   \label{L1}
\end{eqnarray}
where \eqref{r} was utilized. To ease the presentation, we will consider each term on the right--hand side of \eqref{L1} separately:
\subsection*{The First Term:}
\begin{eqnarray}
\int\limits_{t_0}^te_n^T\left( \sigma \right) \alpha ^T\left( \bar{N}\left( \sigma \right) 
-C\text{Sgn}\left( e_n\left( \sigma \right) \right) \right) d\sigma  
&=& \int\limits_{t_0}^t\sum\limits_{i=1}^m \alpha _i e_{n,i}\left( \sigma \right) \left( \bar{N}_i\left( \sigma \right) 
-C_i\text{sgn}\left(e_{n,i}\left( \sigma \right) \right) \right) d\sigma   \nonumber \\
&\leq &\sum\limits_{i=1}^m\alpha _i\left( \zeta _{\bar{N}_i}-C_i\right) \int\limits_{t_0}^t\left| e_{n,i}\left( \sigma \right) \right| d\sigma \label{Lterm1}
\end{eqnarray}
\subsection*{The Second Term:}
\begin{eqnarray}
-\int\limits_{t_0}^te_n^T\left( \sigma \right) \alpha ^T\Omega \left( \sigma \right) C\text{Sgn}\left( e_n\left( \sigma \right) \right) d\sigma 
&=& -\int\limits_{t_0}^t\sum\limits_{i=1}^{m-1}\alpha _ie_{n,i}\left( \sigma \right) \sum\limits_{j=i+1}^m C_j \Omega _{i,j} \left( \sigma \right) 
\text{sgn}\left( e_{n,j}\left( \sigma \right) \right) d\sigma \nonumber \\
&\leq &\sum\limits_{i=1}^{m-1}\sum\limits_{j=i+1}^m\alpha _i C_j \zeta_{\Omega_{i,j}}\int\limits_{t_0}^t\left| e_{n,i}\left(\sigma\right)\right|d\sigma
\label{Lterm2}
\end{eqnarray}
\subsection*{The Third Term:}
\begin{eqnarray}
\int\limits_{t_0}^t\dot{e}_n^T\left( \sigma \right) \bar{N}\left( \sigma \right) d\sigma  &=&\int\limits_{t_0}^t\sum\limits_{i=1}^m\dot{e}_{n,i}^T\left( \sigma \right) \bar{N}_i\left( \sigma \right) d\sigma \nonumber \\
&\leq &\sum\limits_{i=1}^m\zeta _{\bar{N}_i}\int\limits_{t_0}^t\left| \dot{e}_{n,i}\left( \sigma \right) \right| d\sigma \nonumber \\
&\leq &\sum\limits_{i=1}^m\zeta _{\bar{N}_i}\left( \gamma _1+\gamma_2\int\limits_{t_0}^t\left| e_{n,i}\left( \sigma \right) \right| d\sigma 
+\left| e_{n,i}\right| \right) \label{Lterm3}
\end{eqnarray}
\subsection*{The Fourth Term:}
\begin{eqnarray}
-\int\limits_{t_0}^t\dot{e}_n^T\left( \sigma \right) \Omega \left( \sigma \right) C\text{Sgn}\left( e_n\left( \sigma \right) \right) d\sigma 
&=& -\int\limits_{t_0}^t\sum\limits_{i=1}^{m-1}\dot{e}_{n,i}\left( \sigma \right) \sum\limits_{j=i+1}^m C_j \Omega _{i,j}\left( \sigma \right) \text{sgn}
\left( e_{n,j}\left( \sigma \right) \right) d\sigma \nonumber \\
&\leq &\sum\limits_{i=1}^{m-1}\sum\limits_{j=i+1}^m C_j \zeta _{\Omega_{i,j}}\int\limits_{t_0}^t\left| \dot{e}_{n,i}\left( \sigma \right) \right| d\sigma  \label{Lterm4} \\
&\leq &\sum\limits_{i=1}^{m-1}\sum\limits_{j=i+1}^m C_j \zeta _{\Omega_{i,j}}\left( \gamma _1+\gamma _2\int\limits_{t_0}^t\left| e_{n,i}\left( \sigma \right) \right| d\sigma +\left| e_{n,i}\right| \right)  \nonumber
\end{eqnarray}
\subsection*{The Fifth Term:}
\begin{eqnarray}
-\int\limits_{t_0}^t\dot{e}_n^T\left( \sigma \right) C\text{Sgn}\left( e_n\left( \sigma \right) \right) d\sigma 
&=& -\int\limits_{t_0}^t\sum\limits_{i=1}^m C_i \dot{e}_{n,i}\left( \sigma \right) \text{sgn}\left( e_{n,i}\left( \sigma \right) \right) d\sigma
\nonumber \\
&=& -\sum\limits_{i=1}^mC_i\int\limits_{t_0}^t\text{sgn}\left( e_{n,i}\left(\sigma \right) \right) d\left( e_{n,i}\right) \nonumber \\
&=&-\sum\limits_{i=1}^mC_i\int\limits_{t_0}^td\left( \left| e_{n,i}\right| \right) \nonumber \\
&=&-\sum\limits_{i=1}^mC_i\left| e_{n,i}\left( t\right) \right| + \sum\limits_{i=1}^mC_i\left| e_{n,i}\left( t_0\right) \right| . \label{Lterm5}
\end{eqnarray}
It is noted that, the result of Lemma \ref{Lemma1} was utilized to obtain \eqref{Lterm3} and \eqref{Lterm4}. After combining the upper bounds in \eqref{Lterm1}--\eqref{Lterm5}, we obtain 
\begin{eqnarray}
\int\limits_{t_0}^tL\left( \sigma \right) d\sigma  &\leq & 
\sum\limits_{i=1}^{m-1}\alpha _i\left[ \left(1 +\frac{\gamma _2}{\alpha _i}\right) \left( \zeta _{\bar{N}_i} + \sum\limits_{j=i+1}^m \zeta_{\Omega_{i,j}}C_j \right) - C_i\right] \int\limits_{t_0}^t\left| e_{n,i}\left( \sigma \right) \right| d\sigma  \nonumber \\
&&+\alpha _m\left[ \left(1 +\frac{\gamma _2}{\alpha _m}\right)\zeta _{\bar{N}_m}-C_m\right] \int\limits_{t_0}^t\left| e_{n,m}\left( \sigma \right) \right| d\sigma \nonumber \\
&&+\left( \zeta _{\bar{N}_m}-C_m\right) \left| e_{n,m}\right|   \nonumber \\
&&+\sum\limits_{i=1}^{m-1}\left( \zeta _{\bar{N}_i}+\sum\limits_{j=i+1}^m\zeta_{\Omega _{i,j}}C_j-C_i\right) \left| e_{n,i}\right|   \nonumber \\
&&+\gamma _1\sum\limits_{i=1}^{m-1}\sum\limits_{j=i+1}^m\zeta _{\Omega_{i,j}}C_j+\gamma _1\sum\limits_{i=1}^m\zeta _{\bar{N}_i}
+\sum\limits_{i=1}^mC_i\left| e_{n,i}\left( t_0\right) \right| . \label{Lint3}
\end{eqnarray}
Based on \eqref{Lint3}, we first choose $C_m$ to satisfy \eqref{Lgain1} to make second and third expressions on the right--hand side negative,
we next choose $C_i$ starting from $\left( m-1\right) $ with a decreasing order to satisfy \eqref{Lgain2} to make first and fourth expressions on the right--hand side negative,
and finally, we utilized the definition of $\zeta_L$ in \eqref{Lconstant} to obtain \eqref{Lproof}, thus completing the proof of Lemma \ref{Lemma2}.
\end{proof}

\section{Proof of Theorem \protect\ref{Theorem2}}

\label{App4}

\begin{proof}
Let the auxiliary function $P\left( {t}\right) \in \mathbb{R}$ be defined as follows 
\begin{equation}
P\triangleq \zeta _{L}-\int_{t_{0}}^{t}L\left( \sigma \right) d\sigma.
\label{p11}
\end{equation}%
where the terms $\zeta _{L}$ and $ L(t)$ were defined in \eqref{Lconstant} and \eqref{L}, respectively, when the entries of the control gain matrix $C$ are chosen to satisfy \eqref{Lgain1} and \eqref{Lgain2}, from the proof of Lemma \ref{Lemma2} given in Appendix \ref{App3}, we can conclude that $P\left( t\right) $ is non--negative. At this stage, consider the Lyapunov function, denoted by $V\left( s,t\right) \in \mathbb{R}$, defined as follows
\begin{equation}
V\triangleq V_{1}+P  \label{Lyap}
\end{equation}%
where $s\left( t\right) \in \mathbb{R}^{\left[ \left( n+1\right) m + 1\right] \times 1}$ is defined as 
\begin{equation}
s\triangleq \left[ 
\begin{array}{ll}
z^{T} & \sqrt{P}%
\end{array}%
\right] ^{T}.  \label{ss}
\end{equation}%
By utilizing \eqref{M}, \eqref{Lyap} can be upper and lower bounded in the following form 
\begin{equation}
W_{1}\left( s\right) \leq V\left( s,t\right) \leq W_{2}\left( s\right) 
\label{V2bound}
\end{equation}%
where $W_{1}\left( s\right) $, $W_{2}\left( s\right) \in \mathbb{R}$ are
defined as 
\begin{equation}
W_{1}\triangleq \lambda _{2}\left\Vert s\right\Vert ^{2}\text{,  }%
W_{2}\triangleq \lambda _{3}\left( \left\Vert s\right\Vert \right)
\left\Vert s\right\Vert ^{2}  \label{V2bound2}
\end{equation}%
with $\lambda_{2} \triangleq \frac{1}{2}\min \left\{ 1,\underline{m}\right\} $ and 
$ \lambda_{3} \triangleq \max \left\{ 1,\frac{1}{2}\bar{m}\left( \left\Vert z\right\Vert \right) \right\} $ .

Taking the time derivative of $V\left( t\right) $, utilizing the time derivative of \eqref{Lproof}, canceling common terms and following similar steps to that of proof of Theorem \ref{Theorem1}  yields   
\begin{eqnarray}
\dot{V}
&=&-\sum\limits_{i=1}^{n-1}e_{i}^{T}e_{i}+e_{n-1}^{T}e_{n}-e_{n}^{T}\alpha
e_{n}-r^{T}r  \notag \\
&&+\left[ r^{T}\widetilde{N}-k_{p}r^{T}r\right] +\left[ -\sum\limits_{i=1}^{m-1}r_{i}\left( \Lambda _{i}+\Psi _{i}+\Phi _{i}\right)
-\sum\limits_{i=1}^{m-1}k_{d,i}r_{i}^{2}\right]   \label{V2}
\end{eqnarray}
which can be rearranged to have the following form 
\begin{eqnarray}
\dot{V} &\leq &-\sum_{i=1}^{n-2}\left\Vert e_{i}\right\Vert ^{2}-\frac{1%
}{2}\left\Vert e_{n-1}\right\Vert ^{2}-\left( \lambda _{\min }\left( \alpha
\right) -\frac{1}{2}\right) \left\Vert e_{n}\right\Vert ^{2}-r^{T}r  \notag
\\
&&+\frac{\rho _{\widetilde{N}}^{2}\left( \left\Vert z\right\Vert \right) }{%
4k_{p}}\left\Vert z\right\Vert ^{2}+\sum\limits_{i=1}^{m-1}\frac{\rho
_{i}^{2}\left( \left\Vert z\right\Vert \right) }{4k_{d,i}}\left\Vert
z\right\Vert ^{2}  \label{V2c} \\
&\leq &-\left( \lambda _{4}-\frac{\rho _{\widetilde{N}}^{2}\left( \left\Vert
z\right\Vert \right) }{4k_{p}}-\sum\limits_{i=1}^{m-1}\frac{\rho
_{i}^{2}\left( \left\Vert z\right\Vert \right) }{4k_{d,i}}\right) \left\Vert
z\right\Vert ^{2}  \label{V2d}
\end{eqnarray}%
where $\lambda _{4}\triangleq \min \left\{ \frac{1}{2},\lambda _{\min
}\left( \alpha \right) -\frac{1}{2}\right\} $. When the controller gains $k_{p}$ and $k_{d,i}$ for $i=1,2,...,\left(
n-1\right) $ are selected large enough such that the regions defined by $\mathcal{D}_{z}\triangleq \left\{ z:\left\Vert z\right\Vert \leq \mathcal{R}
\right\} $ and $\mathcal{D}_{s}\triangleq \left\{ s:\left\Vert s\right\Vert\leq \mathcal{R}\right\} $ with $\mathcal{R}$ defined as  
\begin{equation}
\mathcal{R=}\min \left\{ \rho _{\widetilde{N}}^{-1}\left( 2\sqrt{k_{p}\frac{1-\beta}{m}}\right) ,\rho _{i}^{-1}\left( 2\sqrt{k_{d,i}\frac{1-\beta }{m}}\right) \right\} \text{ for \ \ }i=1,2,...,\left( m-1\right) 
\label{Regiondef}
\end{equation}%
are non--empty, from \eqref{V2d} and the definition of $s$, one can restate 
\begin{equation}
\dot{V}\leq -\beta \left\Vert z\right\Vert ^{2}=-W\left( s\right) ,\forall s\in \mathcal{D}_{s}  \label{someVdot}
\end{equation}%
where $\beta \in \mathbb{R}$ is a positive constant that satisfies $0\leq \beta < 1$. From the definition of \eqref{Lyap} and \eqref{someVdot}, it is obvious that and $V \left( t \right) \in \mathcal{L}_{\infty }$, also from the proof of Theorem \ref{Theorem1} and outcome of standart linear analysis
methods, we can conclude that all signal in the closed--loop error system are bounded and furthermore, from the boundedness of $\dot{W}(s)$, we can state $W\left( s\right) $ is uniformly continuous.

Based on the definition of $\mathcal{D}_{s}$, another region, $%
\mathcal{S}$, can be defined in the following form%
\begin{equation}
\begin{array}{cl}
\mathcal{S}\triangleq  & \left\{ s\in \mathcal{D}_{s}:W_{2}\left( s\right) <\lambda _{3}\left( \rho _{\widetilde{N}}^{-1}\left(2\sqrt{k_{p} \frac{1-\beta }{m}}\right) \right) ^{2}\right\}  \\ 
& \cap \left\{ s\in \mathcal{D}_{s}:W_{2}\left( s\right) <\lambda _{3}\left( \rho _{1}^{-1}\left( 2\sqrt{k_{d,1}\frac{1-\beta}{m}}\right) \right)^{2}\right\}  \\ 
& \cap \vdots \\ 
& \cap \left\{ s\in \mathcal{D}_{s}:W_{2}\left( s\right) <\lambda _{3}\left( \rho _{\left( m-1\right) }^{-1}\left( 2\sqrt{k_{d,\left( m-1\right) }\frac{%
1-\beta}{m}}\right) \right) ^{2}\right\} .
\end{array}
\label{S}
\end{equation}
A direct application of Theorem 8.4 in \cite{khalil} can be used to prove that $\left\Vert z\left( t\right) \right\Vert \rightarrow 0$ as $t\rightarrow +\infty $ $\forall s\left( t_{0}\right) \in \mathcal{S}$. Based on the definition of $z\left( t\right) $, it is easy to show that $\left\Vert e_{i}\left( t\right) \right\Vert ,\left\Vert r\left( t\right) \right\Vert \rightarrow 0$ as $t\rightarrow +\infty $ $\forall s\left( t_{0}\right) \in \mathcal{S}$, $i=1,2,...,n$. From \eqref{r}, it is clear that $\left\Vert \dot{e}_{n}\left( t\right) \right\Vert \rightarrow 0$ as $t\rightarrow +\infty $ $\forall s\left( t_{0}\right) \in \mathcal{S}$. By utilizing \eqref{ei} recursively, it can be proven that $\left\Vert e_{1}^{\left( i\right) }\left( t\right) \right\Vert \rightarrow 0$ as $t\rightarrow +\infty $, $i=1,2,...,n$ $\forall s\left( t_{0}\right) \in \mathcal{S}$. Note that the region of attraction can be made arbitrarily large to include any initial conditions by choosing the controller gains $k_{p}$ and $k_{d,i}$ for $i=1,2,...,\left( m-1\right) $.  This fact implies that the stability result obtained by proposed method is semi--global.
\end{proof} 

\end{document}